\let\oldmarginpar\marginpar
\renewcommand{\marginpar}[2][rectangle,draw,text width= 2cm,rounded corners]{
    \oldmarginpar{
    \scriptsize \tikz \node at (0,0) [#1]{#2};}
    }
\def\mvint_#1{\mathchoice
          {\mathop{\vrule width 6pt height 3 pt depth -2.5pt
                  \kern -9pt \intop}\limits_{\kern -3pt #1}}%
          {\mathop{\vrule width 5pt height 3 pt depth -2.6pt
                  \kern -6pt \intop}\nolimits_{#1}}%
          {\mathop{\vrule width 5pt height 3 pt depth -2.6pt
                  \kern -6pt \intop}\nolimits_{#1}}%
          {\mathop{\vrule width 5pt height 3 pt depth -2.6pt
                  \kern -6pt \intop}\nolimits_{#1}}}
\newcommand{\Ep}{\bigwedge\nolimits}
\newcommand{\bbbh}{\mathbb H}
\newcommand{\bbbr}{\mathbb R}
\newcommand{\bbbb}{\mathbb B}
\newcommand{\bbbz}{\mathbb Z}
\newcommand{\R}{\mathbb R}
\newcommand{\overbar}[1]{\mkern 1.7mu\overline{\mkern-1.7mu#1\mkern-1.5mu}\mkern 1.5mu}
\newcommand{\eps}{\varepsilon}
\newtheorem{theorem}{Theorem}[section]
\newtheorem*{theorem*}{Theorem}
\newtheorem{lemma}[theorem]{Lemma}
\newtheorem{corollary}[theorem]{Corollary}
\theoremstyle{definition}
\newtheorem{remark}[theorem]{Remark}
\newtheorem*{remark*}{Remark}
\newcommand{\Sph}{\mathbb S}
\newcommand{\supp}{{\rm supp}\,}
\renewcommand{\tocsection}[3]{%
  \indentlabel{\@ifnotempty{#2}{\bfseries\ignorespaces#1 #2\quad}}\bfseries#3}
\renewcommand{\tocsubsection}[3]{%
  \indentlabel{\@ifnotempty{#2}{\ignorespaces#1 #2\quad}}#3}
\newcommand\@dotsep{4.5}
\def\@tocline#1#2#3#4#5#6#7{\relax
  \ifnum #1>\c@tocdepth 
  \else
    \par \addpenalty\@secpenalty\addvspace{#2}%
    \begingroup \hyphenpenalty\@M
    \@ifempty{#4}{%
      \@tempdima\csname r@tocindent\number#1\endcsname\relax
    }{%
      \@tempdima#4\relax
    }%
    \parindent\z@ \leftskip#3\relax \advance\leftskip\@tempdima\relax
    \rightskip\@pnumwidth plus1em \parfillskip-\@pnumwidth
    #5\leavevmode\hskip-\@tempdima{#6}\nobreak
    \leaders\hbox{$\m@th\mkern \@dotsep mu\hbox{.}\mkern \@dotsep mu$}\hfill
    \nobreak
    \hbox to\@pnumwidth{\@tocpagenum{\ifnum#1=1\bfseries\fi#7}}\par
    \nobreak
    \endgroup
  \fi}
\renewcommand\csname r@tocindent0\endcsname{0pt}
\def\l@subsection{\@tocline{2}{-5pt}{2.5pc}{5pc}{}}
\title{On the Gromov non-embedding theorem}
\author[Haj\l{}asz]{Piotr Haj\l{}asz}
\address{Piotr Haj\l{}asz,\newline \indent Department of Mathematics, University of Pittsburgh, \newline \indent 301 Thackeray Hall, Pittsburgh,
Pennsylvania 15260}
\email{hajlasz@pitt.edu}
\thanks{P.H. was supported by NSF grant  DMS-2055171 and 
by Simons Foundation grant
917582.}
\author[Schikorra]{Armin Schikorra}
\address{Armin Schikorra,\newline \indent Department of Mathematics, University of Pittsburgh, \newline \indent 301 Thackeray Hall, Pittsburgh,
Pennsylvania 15260}
\email{armin@pitt.edu}
\thanks{A.S. was supported by NSF Career DMS-2044898.}
\keywords{Heisenberg group, H\"older continuity, embedding, Gromov's theorem}
\subjclass[2020]{Primary 53C17, 53C23; Secondary 55M05, 57R40, 58A10}
\begin{document}

\begin{abstract}
We develop a new method leading to an elementary proof of a generalization of Gromov's theorem about non existence of H\"older embeddings into the Heisenberg group.
\end{abstract}

\maketitle

\section{Introduction}
\label{intro}
The Heisenberg group $\bbbh^n$ equipped with the Carnot-Carath\'eodory metric $d_c$ is homeomorphic to $\R^{2n+1}$, but its Hausdorff dimension equals $2n+2$. On compact sets the metric $d_c$ satisfies $|p-q|\lesssim d_c(p,q)\lesssim|p-q|^{1/2}$. In fact, in some directions the metric $d_c$ is comparable to the Euclidean metric, while in other directions it is of fractal nature. The Heisenberg group is also isomorphic to the standard contact structure on $\R^{2n+1}$, and the metric $d_c$ gives a natural way of measuring distances between points along curves tangent to the contact distribution. It is well known that every Legendrian (i.e., horizontal) map $f:U\subset\R^m\to\bbbh^n\cong\bbbr^{2n+1}$, that is smooth or  Lipschitz, must satisfy $\operatorname{rank} Df\leq n$, and hence there is no Lipschitz embedding of $U$ into $\bbbh^n$, if $m\geq n+1$. On the other hand, any smooth or Lipschitz map $f:U\to\bbbr^{2n+1}$, is $\frac{1}{2}$-H\"older continuous as a map into $\bbbh^n$. In this context Gromov in his seminal work \cite{gromov2}, initiated the program of investigating properties of H\"older continuous mappings into the Heisenberg group. He proved the following non-embedding result \cite[3.1.A]{gromov2}:
\begin{theorem}
\label{T1}
Every $\gamma$-H\"older continuous embedding $f:\R^m\to\bbbh^n$, $m\geq n+1$, satisfies $\gamma\leq\frac{n+1}{n+2}$.
\end{theorem}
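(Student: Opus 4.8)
The plan is to isolate the whole difficulty into a single geometric lower bound on the $d_c$-size of an embedded cell and then read off the exponent from a crude covering count. First I would reduce to the critical dimension $m=n+1$: restricting $f$ to an $(n+1)$-dimensional subcube $Q\subset\R^m$ yields a $\gamma$-H\"older embedding of $Q=[0,1]^{n+1}$ into $\bbbh^n$, so it suffices to bound $\gamma$ in this case. Subdividing $Q$ into $N^{n+1}$ subcubes of side $1/N$ and using that $f$ is $\gamma$-H\"older into $d_c$, each subcube has image of $d_c$-diameter $\lesssim N^{-\gamma}$. Thus $K:=f(Q)$ is covered by $N^{n+1}$ sets of $d_c$-diameter $\lesssim N^{-\gamma}$, so the $d_c$-covering number obeys $\mathcal N_{d_c}(K,\delta)\lesssim N^{n+1}$ with $\delta\sim N^{-\gamma}$.

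The heart of the matter is the following Key Lemma: any homeomorphic image $K\subset\bbbh^n$ of $[0,1]^{n+1}$ satisfies $\mathcal N_{d_c}(K,\delta)\gtrsim \delta^{-(n+2)}$, i.e.\ its lower box-counting dimension in $d_c$ is at least $n+2$. Granting this and comparing the two estimates at $\delta\sim N^{-\gamma}$ gives $N^{n+1}\gtrsim N^{\gamma(n+2)}$ for all large $N$, whence $n+1\ge\gamma(n+2)$ and $\gamma\le\frac{n+1}{n+2}$. In other words the exponent $\frac{n+1}{n+2}$ is exactly the ratio of the topological dimension $n+1$ of the cell to the forced Heisenberg dimension $n+2$ of its image, and the entire theorem is packaged into the Key Lemma.

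The content of the Key Lemma is contact-geometric. Writing $\alpha=dt-\sum_i(x_i\,dy_i-y_i\,dx_i)$ for the contact form and $\omega=d\alpha$ for the symplectic form on the horizontal $\R^{2n}$, note that any submanifold $S$ tangent to the horizontal distribution has $\alpha|_S=0$, hence $\omega|_S=(d\alpha)|_S=0$, so $T_pS$ is $\omega$-isotropic and $\dim S\le n$. An $(n+1)$-cell can therefore never be horizontal on an open piece: it must be transverse to the distribution somewhere and so spread into the vertical $t$-direction. Since a vertical segment carries weight $2$ under the dilations $\delta_r(z,t)=(rz,r^2t)$, near a transverse point the cell looks like $n$ horizontal directions (weight $1$) together with one vertical direction (weight $2$), giving local $d_c$-dimension $n+2$. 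To make this quantitative I would smooth $f$ to $f_\eps$, exploit that an embedding forces a nonzero topological pairing (an intersection or linking number associated to the embedded sphere $f(\partial Q)$, with analytic avatar a nonvanishing integral built from $f_\eps^*\omega$), and localize this pairing to the cover: a Kor\'anyi ball $B(p_j,r_j)$ is a box of horizontal size $r_j$ and vertical size $r_j^2$, which can carry only an amount of order $r_j^{\,n+2}$ of the pairing, so nonvanishing forces $\sum_j r_j^{\,n+2}\gtrsim 1$, which is the Key Lemma.

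The main obstacle is precisely this last step: producing the correct topological pairing for a \emph{merely topological} (not smooth) embedding and matching the Heisenberg weights so that each ball contributes the sharp power $r^{\,n+2}$. Two difficulties must be handled with care. First, $f$ is only H\"older, so $f^*\omega$ is not classically defined; I would smooth to $f_\eps$ with $\|Df_\eps\|_\infty\lesssim\eps^{\gamma-1}$ and crucially use the extra regularity of the corrected vertical coordinate, which is $2\gamma$-H\"older in $d_c$, to keep the error terms controlled as $\eps\to0$. Second, the non-vanishing of the pairing must be derived from the embedding hypothesis alone, via degree theory on the complement of $f(\partial Q)$; this is where being an embedding, rather than merely H\"older, is essential, and where I anticipate the bulk of the work.
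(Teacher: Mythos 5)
Your reduction to $m=n+1$, the covering count, and the exponent arithmetic $N^{n+1}\gtrsim N^{\gamma(n+2)}$ are all fine, and you have correctly identified two of the decisive analytic inputs: the mollification bound $\Vert Df_\eps\Vert_\infty\lesssim\eps^{\gamma-1}$ and the fact that the ``corrected'' vertical increment $\varphi(f(x),f(y))$ is controlled by $d_K(f(x),f(y))^2\lesssim|x-y|^{2\gamma}$. But the entire content of the theorem has been moved into your Key Lemma, and the sketch you give for it does not close. The central gap is the assertion that ``a Kor\'anyi ball can carry only an amount of order $r^{n+2}$ of the pairing.'' Your justification --- an $(n+1)$-plane cannot be $\omega$-isotropic, so one direction must be vertical --- is a statement about tangent planes of smooth surfaces and does not apply to a merely H\"older map; moreover the dual cohomology class is represented by an \emph{arbitrary} $(n+1)$-form, not one visibly built from $\omega$. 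The paper converts the heuristic into an estimate by a purely algebraic device your outline lacks: the Lefschetz isomorphism $\upomega\wedge\cdot:\Ep^{n-1}(\R^{2n})^*\to\Ep^{n+1}(\R^{2n})^*$ shows that \emph{every} $(n+1)$-form can be written as $\upalpha\wedge\beta+d\upalpha\wedge\delta$, and after one integration by parts the pulled-back pairing becomes $\int f_\eps^*\upalpha\wedge f_\eps^*(\beta+d\delta)$, in which exactly one factor gains the weight-two rate $\eps^{2\gamma-1}$ and the remaining $n$ factors the weight-one rate $\eps^{\gamma-1}$, giving the total $\eps^{(n+2)\gamma-(n+1)}$ from which the exponent is read off. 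Without this decomposition you cannot guarantee that the dual form picks up the weight-two gain at all.

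A second, lesser gap is the localization itself: distributing a compactly supported cohomology class of $\R^{2n+1}\setminus f(\partial Q)$ over a cover by Kor\'anyi balls with controlled overlaps is an extra technical layer you do not address, and one the paper avoids entirely. The paper works with a single global object: an inductive Mayer--Vietoris-type construction (Alexander duality) produces one closed form $\kappa\in\Omega_c^{n+1}(\R^{2n+1}\setminus f(\Sph^n))$ with $\int_{\bbbb^{n+1}}f_i^*\kappa=1$ for every smooth $f_i$ uniformly close to $f$ on the sphere, and this is contradicted by one global estimate $\Vert f_i^*\kappa\Vert_{L^\infty}\to 0$. If you want to keep your covering-number formulation, the honest route is to prove the theorem by such a global argument first and then deduce the dimension bound as a corollary, rather than the other way around.
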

The original proof is actually quite difficult since it requires the $h$-principle and microflexibility. 
For a more detailed presentation of this proof and another proof based on the Rumin complex, see \cite{pansu}. There is also a proof of the non-existence of an embedding $f\in C^{0,\gamma}(\R^2,\bbbh^1)$, $\gamma>2/3$, due to Z\"ust \cite{zust1}, based on a factorization through a topological tree. However, the argument cannot be adapted to the general case of Theorem~\ref{T1}.

Gromov \cite{gromov2}, conjectured that if $f:\R^2\to\bbbh^1$ is a $\gamma$-H\"older embedding, then $\gamma\leq \frac{1}{2}$. Back in 2015, Haj\l{}asz, Mirra, and Schikorra ran numerical experiments suggesting that in fact an embedding could possibly exist for all $\gamma<\frac{2}{3}$. They provided numerical support indicating that any smooth map $f:\Sph^1\to\bbbh^1$ admits a $\gamma$-H\"older extension $f:\bbbb^2\to\bbbh^1$ if $\gamma\in (0,\frac{2}{3})$. Recently, Wenger and Young \cite{wengery}, proved that the pair $(\bbbr^2,\bbbh^1)$ has the $\gamma$-H\"older extension property for any $\gamma\in (0,\frac{2}{3})$. In particular, any mapping $f\in C^{0,\gamma}(\Sph^1,\bbbh^1)$ admits a $C^{0,\gamma}$-extension to $\bbbb^2$. However, their construction will not lead to an embedding.

In the last two decades, there has been a substantial interest in studying H\"older continuous functions and mappings from the analytic and geometric point of view.
While H\"older continuous mappings need not be differentiable at any point, a new approach to analysis of such maps has been developed by Brezis and Nguyen \cite{brezisn1,brezisn2}, as an abstract theory, Conti, De Lellis and Sz\'ekelyhidi \cite{contids}, with applications to rigidity of isometric embeddings, and Z\"ust \cite{zust2}, in the context of currents. 
The theory plays also an important role in Isett's solution of the Onsager conjecture \cite{isett}.
See also \cite{sickel2,sickel1,LS20}, for related research.
For a detailed study of Jacobians and pullbacks of differential forms by H\"older continuous mappings with applications to the geometry and topology of the Heisenberg groups, see \cite{HMS}.
This paper develops at depth the theory related to the proof of Theorem~\ref{T2} presented below.

The purpose of this work is to provide an elementary proof of the following generalization of Gromov's non-embedding theorem.
\begin{theorem}
\label{T2}
Suppose that $m\geq n+1$,
$$
\frac{1}{2}\leq\gamma\leq\frac{n+1}{n+2}
\quad
\text{and}
\quad
\theta=\frac{n+1}{n}-\frac{2\gamma}{n}.
$$
Then, there does not exist a map
$$
f\in C^{0,\gamma+}({\bbbb}^m,\bbbh^n)\cap C^{0,\theta}({\bbbb}^m,\R^{2n+1})
$$
or a map
$$
f\in C^{0,\gamma}({\bbbb}^m,\bbbh^n)\cap C^{0,\theta+}({\bbbb}^m,\R^{2n+1}),
$$
such that $f|_{\Sph^{m-1}}$ is a topological embedding. 
\end{theorem}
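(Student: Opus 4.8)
\section*{Proof proposal}

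The plan is to reduce the statement to the critical dimension $m=n+1$ and then to play a topological linking invariant of the embedded sphere against an analytic vanishing forced by the two H\"older bounds. First I would reduce to $m=n+1$: intersecting $\bbbb^m$ with a coordinate $(n+1)$-plane through the origin produces a subball $\bbbb^{n+1}\subset\bbbb^m$ whose boundary is an equatorial $\Sph^{n}\subset\Sph^{m-1}$, and restricting $f$ to this $\bbbb^{n+1}$ preserves both H\"older hypotheses while keeping $f|_{\Sph^{n}}$ a topological embedding (a restriction of an embedding). So it suffices to rule out $f\colon\bbbb^{n+1}\to\bbbh^{n}\cong\bbbr^{2n+1}$ with $f|_{\Sph^{n}}$ an embedding. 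Writing $f=(f_1,\dots,f_{2n},f_{2n+1})$ with $f_{2n+1}$ the vertical coordinate, I record the effective Euclidean H\"older exponents: each horizontal component is $\theta$-H\"older, while the vertical component is $2\gamma$-H\"older, the latter because $d_c$ dominates the square root of the vertical increment (and indeed $2\gamma\ge\theta$ precisely when $\gamma\ge\tfrac12$). These are exactly the weights attached to $dx_i,dy_i$ (weight $1$) and to $dt$ (weight $2$) in the contact grading.

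Next comes the topological input. Since $f|_{\Sph^{n}}$ is a topological embedding of an $n$-sphere into $\bbbr^{2n+1}$ and $n+n=(2n+1)-1$, Alexander duality makes $[f(\Sph^{n})]$ nontrivial in $H_n$ of its complement, so there is an $n$-cycle $K\subset\bbbr^{2n+1}\setminus f(\Sph^{n})$ with $\operatorname{lk}(f(\Sph^{n}),K)=1$. Let $\Lambda$ be a Poincar\'e dual of $K$, a closed $(n+1)$-form concentrated in a tubular neighborhood of $K$; then, taking $f(\bbbb^{n+1})$ as a filling of $f(\Sph^{n})$, one has $\int_{\bbbb^{n+1}}f^*\Lambda=\operatorname{lk}(f(\Sph^{n}),K)=1$. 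The crucial refinement is to choose $\Lambda$ within its cohomology class on $\bbbr^{2n+1}\setminus K$ so that it lies in the contact ideal generated by $\alpha$ and $d\alpha$; this is where the Rumin complex and the weight $n+2$ enter, and it is the geometric origin of the exponent $\tfrac{n+1}{n+2}$. Thus $\Lambda=\alpha\wedge\phi+d\alpha\wedge\psi$ for suitable smooth $\phi,\psi$.

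The analytic heart is to show $\int_{\bbbb^{n+1}}f^*\Lambda=0$, contradicting the value $1$. I would define $f^*\Lambda$ distributionally through a mollification $f_\epsilon$ and the pullback calculus of \cite{HMS}. Because $\Lambda$ lies in the contact ideal, every monomial of $f^*\Lambda$ carries a factor of $f^*\alpha$ or $f^*(d\alpha)$; the $d_c$-bound supplies the estimate $|f_{2n+1}(x)-f_{2n+1}(y)-B(x,y)|\lesssim|x-y|^{2\gamma}$, with $B$ the symplectic area term, which says that these two pulled-back forms are ``small'' of order $2\gamma$ rather than merely $\theta$. Spending this $2\gamma$ on the distinguished weight-two (vertical) factor and $\theta$ on each of the remaining $n$ horizontal differentials produces a total budget $2\gamma+n\theta$, which exceeds $n$ at criticality and exceeds $n+1$ strictly in either of the two ``$+$'' variants. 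In this range the integral is well defined, continuous in the H\"older norms, and obeys Stokes; a telescoping estimate across the mollification scales then bounds $\int_{\bbbb^{n+1}}f_\epsilon^*\Lambda$ by a geometric sum with ratio a positive power $\epsilon^{\,2\gamma+n\theta-(n+1)}$, which tends to $0$ exactly when $2\gamma+n\theta>n+1$. Hence the integral vanishes, the desired contradiction.

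The main obstacle is precisely this last sharp vanishing estimate: reconciling the two channels (the factor $f^*\alpha$ versus the factor $f^*(d\alpha)=-2f^*\sigma$, the latter being a derivative of the former) so that both yield the \emph{same} budget $2\gamma+n\theta$, and proving that the borderline value $n+1$ is the genuine threshold. This rests on the commutator- and Young-type bounds of \cite{HMS} applied to $f^*\alpha$ and $f^*(d\alpha)$, and it is exactly what pins down the exponent $\tfrac{n+1}{n+2}$ together with the relation $\theta=\tfrac{n+1}{n}-\tfrac{2\gamma}{n}$, as well as the strictness encoded in the two ``$+$'' cases.
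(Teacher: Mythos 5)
Your architecture matches the paper's: reduce to $m=n+1$, produce via Alexander--Poincar\'e duality a closed, compactly supported $(n+1)$-form $\kappa$ on $\R^{2n+1}\setminus f(\Sph^n)$ whose pullback under smooth approximations integrates to $1$ over $\bbbb^{n+1}$, decompose $\kappa=\upalpha\wedge\beta+d\upalpha\wedge\delta$ (the paper obtains this pointwise from the elementary Lefschetz isomorphism $\upomega\wedge\cdot:\Ep^{n-1}(\R^{2n})^*\to\Ep^{n+1}(\R^{2n})^*$; no Rumin complex or choice within the cohomology class is needed), and then beat the topological $1$ with mollification estimates. But two steps are genuine gaps rather than omitted routine work. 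The one you yourself flag as ``the main obstacle'' --- making the $d\upalpha\wedge\delta$ channel yield the same budget as the $\upalpha\wedge\beta$ channel --- has a one-line resolution you did not supply: for large $i$ the support of $f_i^*\delta$ misses $\Sph^n$, so $f_i^*\delta\in\Omega_c^{n-1}(\bbbb^{n+1})$ and Stokes gives $\int_{\bbbb^{n+1}}df_i^*\upalpha\wedge f_i^*\delta=\int_{\bbbb^{n+1}}f_i^*\upalpha\wedge f_i^*(d\delta)$. The whole integrand collapses to $f_i^*\upalpha\wedge f_i^*(\beta+d\delta)$, and only one estimate on $f_i^*\upalpha$ is ever needed; deferring this to commutator bounds of \cite{HMS} leaves the proof incomplete.

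The more serious gap is in your endgame, which as stated fails at the given exponents. Since $\theta=\frac{n+1}{n}-\frac{2\gamma}{n}$ forces $2\gamma+n\theta=n+1$ identically, the exponent $2\gamma+n\theta-(n+1)$ in your ``geometric sum'' is exactly $0$, never positive, so the telescoping argument does not converge. Your assertion that the budget ``exceeds $n+1$ strictly in either of the two `$+$' variants'' conflates $C^{0,\alpha+}$ with $C^{0,\alpha'}$ for some $\alpha'>\alpha$: membership in $C^{0,\alpha+}$ only means the localized seminorm $[f]_{\alpha,\eps}\to0$ as $\eps\to0$ and does not improve the exponent. The correct mechanism is that the mollification bounds carry these localized seminorms as prefactors,
$\Vert f_i^*\upalpha\Vert_\infty\lesssim i^{1-2\gamma}[f]^2_{\gamma,2i^{-1}}$ and $\Vert f_i^*(\beta+d\delta)\Vert_\infty\lesssim i^{n(1-\theta)}[f]^n_{\theta,i^{-1}}$, whose product is $[f]^2_{\gamma,2i^{-1}}[f]^n_{\theta,i^{-1}}$ with no power of $i$ remaining; the hypothesis $C^{0,\gamma+}$ or $C^{0,\theta+}$ then sends one of the two factors to $0$, producing the contradiction with the integral being $1$. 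Without this observation the proof does not close at the critical relation between $\gamma$ and $\theta$, which is precisely where the theorem lives. (A smaller imprecision: the vertical component $f^t$ alone is not $2\gamma$-H\"older; only the combination $\varphi(f(x),f(y))$ from \eqref{eq2} is $O(|x-y|^{2\gamma})$, as you correctly use later.)
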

Here, $C^{0,\alpha}({\bbbb}^m,X)$ stands for the class of H\"older continuous mappings into a metric space $X$.
Moreover, $f\in C^{0,\alpha+}({\bbbb}^m,X)$ means that 
$$
\lim_{t\to 0^+}\ \sup\left\{\frac{d(f(x),f(y))}{|x-y|^\alpha}:\ x,y\in {\bbbb}^m,\ 0<|x-y|\leq t\right\}= 0.
$$
This is a natural class of mappings: uniformly continuous mappings are precisely mappings of the class $C^{0,0+}$ and if $\alpha\in (0,1)$, then $C^{0,\alpha+}(\R^m)$ is the subspace of $C^{0,\alpha}(\R^m)$
that consists of functions that can be approximated by $C^\infty$ functions in the $C^{0,\alpha}$ norm on compact sets.

If $\gamma=\frac{n+1}{n+2}$, then $\theta=\frac{n+1}{n+2}$, and we obtain the following strengthened version of Gromov's theorem:
\begin{corollary}
\label{T3}
If $m\geq n+1$, then there is no map $f\in C^{0,\frac{n+1}{n+2}+}(\bbbb^m,\bbbh^n)$ such that $f|_{\Sph^{m-1}}$ is a topological embedding. 
\end{corollary}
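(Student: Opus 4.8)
The plan is to deduce Corollary~\ref{T3} directly from Theorem~\ref{T2} by specializing to the endpoint $\gamma=\frac{n+1}{n+2}$. The first thing I would check is the arithmetic already flagged in the excerpt: substituting this value into $\theta=\frac{n+1}{n}-\frac{2\gamma}{n}$ gives
\[
\theta=\frac{n+1}{n}-\frac{2}{n}\cdot\frac{n+1}{n+2}=\frac{n+1}{n}\cdot\frac{n}{n+2}=\frac{n+1}{n+2}=\gamma,
\]
so the two exponents appearing in Theorem~\ref{T2} collapse to the single value $\frac{n+1}{n+2}$.

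The only real content of the argument is a comparison of the two target metrics. I would use the elementary bound recalled in the introduction, namely that on compact subsets of $\bbbh^n\cong\R^{2n+1}$ the Euclidean distance is controlled by the Carnot-Carath\'eodory distance, $|p-q|\lesssim d_c(p,q)$. This yields, for every exponent $\alpha$, the inclusion $C^{0,\alpha+}(\bbbb^m,\bbbh^n)\subseteq C^{0,\alpha+}(\bbbb^m,\R^{2n+1})$: if $d_c(f(x),f(y))/|x-y|^\alpha\to 0$ uniformly as $|x-y|\to 0$, then $|f(x)-f(y)|/|x-y|^\alpha\lesssim d_c(f(x),f(y))/|x-y|^\alpha\to 0$ as well.

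Putting these together, I would argue as follows. Given $f\in C^{0,\frac{n+1}{n+2}+}(\bbbb^m,\bbbh^n)$, the metric comparison places $f$ in $C^{0,\theta+}(\bbbb^m,\R^{2n+1})$, while the trivial inclusion $C^{0,\gamma+}(\bbbb^m,\bbbh^n)\subseteq C^{0,\gamma}(\bbbb^m,\bbbh^n)$ places it in $C^{0,\gamma}(\bbbb^m,\bbbh^n)$. Hence
\[
f\in C^{0,\gamma}(\bbbb^m,\bbbh^n)\cap C^{0,\theta+}(\bbbb^m,\R^{2n+1}),
\]
which is exactly the second class of maps excluded by Theorem~\ref{T2}. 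Consequently $f|_{\Sph^{m-1}}$ cannot be a topological embedding, which is the assertion of the corollary.

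I do not expect a genuine obstacle at this stage, since the corollary is a clean specialization rather than an independent result: the entire difficulty is carried by Theorem~\ref{T2}. The only points requiring care are the endpoint computation $\gamma=\theta$ and the direction of the inequality $|p-q|\lesssim d_c(p,q)$, which is what guarantees that a Heisenberg $C^{0,\gamma+}$ hypothesis automatically upgrades to the Euclidean $C^{0,\theta+}$ hypothesis needed to apply the theorem.
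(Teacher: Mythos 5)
Your proposal is correct and follows exactly the paper's route: the paper obtains Corollary~\ref{T3} by observing that $\gamma=\frac{n+1}{n+2}$ forces $\theta=\frac{n+1}{n+2}$ and then invoking Theorem~\ref{T2}, with the inclusion $C^{0,\alpha+}(\bbbb^m,\bbbh^n)\subseteq C^{0,\alpha+}(\bbbb^m,\R^{2n+1})$ supplied by the comparison \eqref{eq3}. Your only addition is to spell out this inclusion explicitly, which the paper leaves implicit.
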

The recent work of Wenger and Young \cite{wengery}, shows that \Cref{T3} is sharp when $n=1$: for any $\gamma < \frac{2}{3}$, any $C^{0,\gamma}$-embedding of the the $\mathbb{S}^1$-sphere into $\mathbb{H}^1$ can be extended to a map $f \in C^{0,\gamma}({\bbbb^2},\bbbh^1)$.

When $\gamma=\frac{1}{2}$, then $\theta=1$, and we obtain in particular the following result from \cite[Theorem~1.11]{BHW} that was proved by different techniques:
\begin{corollary}
\label{T4}
If $m\geq n+1$, then there is no embedding $f\in C^{0,\frac{1}{2}+}({\bbbb^m},\bbbh^n)\cap C^{0,1}({\bbbb^m},\R^{2n+1})$.
\end{corollary}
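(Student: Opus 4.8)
The plan is to derive \Cref{T4} as the boundary case $\gamma=\tfrac{1}{2}$ of \Cref{T2}. First I would verify that this choice is admissible: since $m\geq n+1$ and $\gamma=\tfrac{1}{2}$ is the left endpoint of the allowed interval $[\tfrac{1}{2},\tfrac{n+1}{n+2}]$, the hypotheses of \Cref{T2} hold for every $n\geq 1$. Substituting $\gamma=\tfrac{1}{2}$ into the defining formula for $\theta$ gives
$$
\theta=\frac{n+1}{n}-\frac{2\cdot\tfrac{1}{2}}{n}=\frac{n+1}{n}-\frac{1}{n}=1,
$$
so the first exclusion clause of \Cref{T2} asserts precisely that no map
$$
f\in C^{0,\frac{1}{2}+}(\bbbb^m,\bbbh^n)\cap C^{0,1}(\bbbb^m,\R^{2n+1})
$$
can have $f|_{\Sph^{m-1}}$ a topological embedding.

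The only point to address is the gap between the hypothesis ``$f|_{\Sph^{m-1}}$ is a topological embedding'' appearing in \Cref{T2} and the hypothesis ``$f$ is an embedding'' appearing in \Cref{T4}. I would close this by the elementary observation that the restriction of a topological embedding to any subspace is again a topological embedding: if $f\colon\bbbb^m\to\bbbh^n$ is an embedding, then so is $f|_{\Sph^{m-1}}$, since $\Sph^{m-1}\subset\bbbb^m$. Hence any embedding in the class of \Cref{T4} would produce a map satisfying the hypotheses of \Cref{T2}, contradicting its conclusion.

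Combining these steps shows that no embedding $f\in C^{0,\frac{1}{2}+}(\bbbb^m,\bbbh^n)\cap C^{0,1}(\bbbb^m,\R^{2n+1})$ exists. I do not anticipate any genuine obstacle here: the content is entirely carried by \Cref{T2}, and the corollary is a direct specialization together with the trivial restriction observation. The only thing worth double-checking is that the regularity classes match verbatim --- the $C^{0,\frac{1}{2}+}$ condition into $\bbbh^n$ together with the $C^{0,1}$ (i.e.\ Lipschitz) condition into $\R^{2n+1}$ is exactly the first alternative of \Cref{T2} at $\gamma=\tfrac{1}{2}$, $\theta=1$, so no limiting or borderline argument is required.
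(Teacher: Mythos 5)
Your proposal is correct and matches the paper's intent exactly: the paper derives \Cref{T4} from \Cref{T2} by the same specialization $\gamma=\tfrac{1}{2}$, $\theta=1$, with the word ``in particular'' signaling precisely the restriction observation you make (that an embedding of $\bbbb^m$ restricts to an embedding of $\Sph^{m-1}$). No further comment is needed.
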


\begin{remark}
In this work we focus our attention on H\"older continuous maps both for simplicity and because Gromov's theorem is stated for H\"older maps.
However, since the proof implicitly relies on distributional Jacobians and distributional pullbacks of forms by H\"older mappings, the results obtained here can likely be generalized to limiting fractional Sobolev spaces. 
For this one should combine the present arguments with the techniques developed in \cite{sickel1,brezisn1,LS20,SVS20}. Also, there is likely a limiting Besov-space result containing all previous examples similar to the limiting Besov space in the Onsager conjecture, \cite{CET94}.
\end{remark}

\subsection*{Acknowledgements} We would like to express our gratitude to Jacob Mirra for his valuable insights and helpful discussions, which greatly assisted us in the development of this paper.

\section{Preliminaries}
In this section we recall basic definitions and we collect auxiliary results needed in the proof of Theorem~\ref{T2}. 

For non-negative expressions $A$ and $B$ we write $A\lesssim B$ if $A\leq CB$ for some constant $C>0$ and $A\approx B$ if $A\lesssim B\lesssim A$. The unit ball and the balls of radius $r$ centered at $0$ will be denoted by $\bbbb^m$ and $\bbbb^m_r$. $\Sph^m$ will stand for the unit sphere centered at the origin. $U$ will always be an open subset in $\R^m$. We will denote the $L^p$ norm by $\Vert f\Vert_p$ or $\Vert f\Vert_{L^p(U)}$.

If $f\in C^{0,\gamma}(U)$, then we write
$$
[f]_\gamma=[f]_{\gamma,U}=\sup_{\substack{x,y\in U\\x\neq y}}\frac{|f(x)-f(y)|}{|x-y|^\gamma}\, ,
\quad
\text{and}
\quad
[f]_{\gamma,\eps}=[f]_{\gamma,\eps,U}=\sup_{\substack{x,y\in U\\0<|x-y|\leq\eps}}
\frac{|f(x)-f(y)|}{|x-y|^\gamma}\, .
$$
Note that if $f\in C^{0,\gamma+}_{\rm loc}(U)$, then for any $U'\Subset U$ we have that $[f]_{\gamma,\eps,U'}\to 0$ as $\eps\to 0$.

\subsection{Pullbacks of differential forms}
The spaces of smooth differential $k$-forms on $U$, and smooth $k$-forms with compact support in $U$, will be denoted by $\Omega^k(U)$ and $\Omega_c^k(U)$ respectively. If the coefficients of the differential forms will be in $L^\infty$, $C^{0,\gamma}$ etc.\ we will write 
$\Omega^k L^\infty(U)$, $\Omega^kC^{0,\gamma}(U)$ etc.

Let $\phi\in C_c^\infty(\bbbb^m)$ be a standard mollifier and $\phi_\eps(x)=\eps^{-m}\phi(x/\eps)$. Then
$f_\eps:=f*\phi_\eps$ satisfies
$$
Df_\eps(x)=\eps^{-m-1}\int_{\bbbb^m_\eps}(f(x-z)-f(x))D\phi\Big(\frac{z}{\eps}\Big)\, dz,
\quad
\Vert Df_\eps\Vert_{L^\infty(\bbbb^m(x_o,r))}\lesssim\eps^{\gamma-1}[f]_{\gamma,\eps,\bbbb^m(x_o,2r)},
$$
provided $0<\eps<r$. This estimate easily implies
\begin{lemma}
\label{T5}
If $f\in C^{0,\gamma}(U,\R^d)$, $U\subset\R^m$, $\gamma\in (0,1]$, and $B(x_o,2r)\subset U$, then for any
$\kappa\in\Omega^k L^\infty(\R^d)$ we have
$$
\Vert f_\eps^*\kappa\Vert_{L^\infty(\bbbb^m(x_o,r))}\lesssim \Vert\kappa\Vert_\infty [f]^k_{\gamma,\eps,\bbbb^m(x_o,2r)}
\eps^{k(\gamma-1)},
\quad
\text{provided } 0<\eps<r.
$$
The implied constant depends on $m$, $d$, $k$ and $\phi$ only.
\end{lemma}

\subsection{The Heisenberg group}
We assume that the reader is familiar with the Heisenberg groups, but we recall some definitions to fix notation. 
Denote coordinates in $\bbbr^{2n+1}$ by $(x,y,t)\in\bbbr^{n}\times\bbbr^{n}\times\bbbr$.
The Heisenberg group $\bbbh^n=\bbbr^{2n+1}$ is a Lie group with the group law
$$
(x,y,t)*(x',y',t')
=
\Big(x+x',y+y',t+t'+2\sum_{j=1}^n(y_jx_j'-x_jy_j')\Big).
$$
The Heisenberg group is equipped with the $2n$-dimensional horizontal distribution which is the kernel of the standard contact form 
\[
\upalpha=dt+2\sum_{j=1}^n (x_jdy_j-y_jdx_j).
\] The Carnot-Carath\'eodory metric is defined as infimum of lengths of horizontal curves (i.e. curves tangent to the horizontal distribution) that connect given two points. The length of such curves is computed with respect to a left invariant Riemannian metric. In other words the Heisenberg group is the standard contact structure on $\R^{2n+1}$ equipped with a natural metric of measuring distances along Legendrian curves. 

It turns out that the metric $d_c$ is bi-Lipschitz equivalent with the Kor\'{a}nyi metric
$$
d_K(p,q) := \Vert q^{-1} * p \Vert_K,
\quad
\text{where}
\quad
\Vert (x,y,t) \Vert_K := (|(x,y)|^4+t^2)^{1/4}.
$$
In what follows we will assume that $\bbbh^n$ is a metric space with respect to the metric $d_K$. It will be convenient to write
\begin{equation}
\label{eq1}
d_K(p,q)=(|\pi(p)-\pi(q)|^4+|\varphi(p,q)|^2)^{1/4}
\approx
|\pi(p)-\pi(q)|+|\varphi(p,q)|^{1/2},
\end{equation}
where $\pi:\R^{2n+1}\to\R^{2n}$, $\pi(x,y,t)=(x,y)$ is the orthogonal projection and
\begin{equation}
\label{eq2}
\varphi(p,q)=t-t'+2\sum_{j=1}^n (x_j'y_j - x_jy_j') \quad \text{for $p=(x,y,t)$, $q= (x',y',t')$}.
\end{equation}
It is important to note that the expression $\varphi(p,q)$ is somewhat similar to the contact form $\upalpha$. It will play an important role in our proofs.

Recall that if $K\subset\R^{2n+1}$ is a compact set, then there is a constant $C\geq 1$ such that
\begin{equation}
\label{eq3}
C^{-1}|p-q|\leq d_K(p,q)\leq C|p-q|^{1/2}
\quad
\text{for } p,q\in K.
\end{equation}
In particular, the identity map $\operatorname{id}:\bbbh^n\to\R^{2n+1}$ is locally Lipschitz continuous.
\begin{lemma}
\label{T7}
Suppose that
$f\in C^{0,\gamma}(U,\bbbh^n)$,  $U\subset\R^m$, and
$\gamma\in (0,1]$. If $\bbbb^m(x_o,2r)\subset U$, then
$$
\Vert f^*_\eps\upalpha\Vert_{L^\infty(\bbbb^m(x_o,r))}\leq 
4^\gamma\Vert D\phi\Vert_1[f]_{\gamma,2\eps,\bbbb^m(x_o,2r)}^2\,\eps^{2\gamma-1}
\quad
\text{for all $0<\eps<r$.}
$$
\end{lemma}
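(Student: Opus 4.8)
The plan is to exploit the special algebraic structure of $\upalpha$, which matches the expression $\varphi$ from \eqref{eq2}, in order to beat the generic bound that Lemma~\ref{T5} would give for a $1$-form (namely of order $[f]_{\gamma,\eps}\eps^{\gamma-1}$). Write $f=(f^x,f^y,f^t)$ in the coordinates $(x,y,t)$ of $\R^{2n+1}$, with scalar components $f^x_j,f^y_j$, $j=1,\dots,n$, and $f^t$, and let $\partial_\ell$, $\ell=1,\dots,m$, denote the partial derivatives on the domain. Pulling back $\upalpha=dt+2\sum_j(x_j\,dy_j-y_j\,dx_j)$ by the mollification $f_\eps$ gives, in the $\ell$-th slot,
\begin{equation*}
(f_\eps^*\upalpha)_\ell=\partial_\ell f^t_\eps+2\sum_{j=1}^n\big(f^x_{\eps,j}\,\partial_\ell f^y_{\eps,j}-f^y_{\eps,j}\,\partial_\ell f^x_{\eps,j}\big).
\end{equation*}
First I would rewrite every factor as a convolution integral, using that $\phi_\eps\ge 0$ has unit mass, that $\partial_\ell f^y_{\eps,j}(x)=\int f^y_j(x-z')(\partial_\ell\phi_\eps)(z')\,dz'$, and that $\int\partial_\ell\phi_\eps=0$. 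Expanding each product as a double integral in variables $z,z'$ and collecting the antisymmetric combination over $j$, the definition \eqref{eq2} of $\varphi$ identifies the resulting bracket as
\begin{equation*}
2\sum_{j=1}^n\big(f^x_j(x-z)f^y_j(x-z')-f^y_j(x-z)f^x_j(x-z')\big)=-\varphi\big(f(x-z),f(x-z')\big)+f^t(x-z)-f^t(x-z').
\end{equation*}

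The key step is then a cancellation. Integrating the two $f^t$-terms against $\phi_\eps(z)(\partial_\ell\phi_\eps)(z')$, one vanishes because $\int\partial_\ell\phi_\eps=0$, while the other reproduces exactly $\partial_\ell f^t_\eps$, which cancels the leading term $\partial_\ell f^t_\eps$ above. Everything then collapses into the single clean identity
\begin{equation*}
(f_\eps^*\upalpha)_\ell(x)=-\int\!\!\int\varphi\big(f(x-z),f(x-z')\big)\,\phi_\eps(z)\,(\partial_\ell\phi_\eps)(z')\,dz\,dz'.
\end{equation*}
I expect this cancellation to be the crux of the argument: the dangerous term $\partial_\ell f^t_\eps$, which individually only obeys the weak bound $\eps^{\gamma-1}$, disappears, and the whole form is expressed through $\varphi$ alone.

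The improved exponent now follows from the quadratic gain $|\varphi(p,q)|\le d_K(p,q)^2$, which is immediate once one checks that the $t$-component of $q^{-1}*p$ equals precisely $\varphi(p,q)$, cf.\ \eqref{eq1}. Since $\phi_\eps$ is supported in $\bbbb^m_\eps$, on the support one has $|z-z'|\le 2\eps$ and $x-z,x-z'\in\bbbb^m(x_o,2r)$ whenever $x\in\bbbb^m(x_o,r)$ and $\eps<r$, so
\begin{equation*}
\big|\varphi\big(f(x-z),f(x-z')\big)\big|\le d_K\big(f(x-z),f(x-z')\big)^2\le [f]_{\gamma,2\eps,\bbbb^m(x_o,2r)}^2\,(2\eps)^{2\gamma}.
\end{equation*}
Finally I would combine this with $\int\phi_\eps=1$ and $\int|\partial_\ell\phi_\eps|=\eps^{-1}\|\partial_\ell\phi\|_1\le\eps^{-1}\|D\phi\|_1$ to obtain the pointwise estimate $4^\gamma\|D\phi\|_1[f]^2_{\gamma,2\eps}\eps^{2\gamma-1}$, and take the supremum over $x\in\bbbb^m(x_o,r)$. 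The only routine points to verify are the differentiation-under-the-integral identities for the convolutions and the exact matching of $\varphi$ with the $t$-component of $q^{-1}*p$.
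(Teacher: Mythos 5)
Your proposal is correct and follows essentially the same route as the paper: both rewrite $f_\eps^*\upalpha$ as a double convolution integral of $\varphi(f(\cdot),f(\cdot))$ against $\phi_\eps$ and $D\phi_\eps$, using $\int D\phi_\eps=0$ to absorb the $df^t_\eps$ term, and then apply the quadratic bound $|\varphi(p,q)|\leq d_K(p,q)^2$ together with $\Vert D\phi_\eps\Vert_1=\eps^{-1}\Vert D\phi\Vert_1$. The identity and constants you obtain match the paper's.
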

\begin{proof}
Using notation $f=(f^x,f^y,f^t)$, \eqref{eq2}, and the fact that for a fixed point $p\in \bbbb^m(x_o,r)$, we have
$\int_{\bbbb^m_\eps}f_\eps^t(p)d\phi(z/\eps)\, dz=0$, the following equalities are easy to verify
\begin{equation*}
\begin{split}
&(f^*_\eps\upalpha)(p)=df^t_\eps(p)+2\sum_{j=1}^n (f_\eps^{x_j}(p)d f_\eps^{y_j}(p)-f_\eps^{y_j}(p)d f_\eps^{x_j}(p))=\\
&
\eps^{-m-1}\int_{\bbbb^m_\eps}
\left( f^t(p-z)-f_\eps^t(p)+
2\sum_{j=1}^n\big((f_\eps^{x_j}(p)f^{y_j}(p-z)-f_\eps^{y_j}(p)f^{x_j}(p-z)\big)\right)d\phi
\left(\frac{z}{\eps}\right)\, dz\\
&=
\eps^{-m-1}\int_{\bbbb^m_\eps}\int_{\bbbb^m_\eps} \varphi(f(p-z),f(p-w))\phi_\eps(w)d\phi\left(\frac{z}{\eps}\right)\, dw\, dz.
\end{split}    
\end{equation*}
Since
$$
|\varphi(f(p-z),f(p-w))|\leq d_K(f(p-z),f(p-w))^2\leq [f]_{\gamma,2\eps,\bbbb^m(x_o,2r)}^2(2\eps)^{2\gamma},
$$
the result easily follows.
\end{proof}
\begin{remark}
One can show that \Cref{T7} implies that for $f \in C^{0,\frac{1}{2}+}(U,\bbbh^n)$, we have 
$
 f^\ast(\upalpha) = 0
$
in the distributional sense. If $f \in C^{0,1}(U,\bbbh^n)$, this is also true in a.e. sense. For more details, see \cite{HMS}.
\end{remark}

\subsection{Decomposition of forms}
In what follows $\upomega=\sum_{j=1}^n dx_j\wedge dy_j\in\Ep^2(\R^{2n})^*$ will denote the standard symplectic form. Then next algebraic result due to Lefschetz is well known, see \cite[Proposition~1.1a]{Bryant} or \cite[Proposition~1.2.30]{huybrechts}.
\begin{lemma}
\label{T8}
The wedge product with the symplectic form
\begin{equation}
\label{eq4}
\upomega\wedge\cdot:\Ep^{n-1}(\R^{2n})^*\to \Ep^{n+1}(\R^{2n})^*,
\end{equation}
is an isomorphism.
\end{lemma}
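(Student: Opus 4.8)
The plan is to deduce the statement from the $\mathfrak{sl}_2$-structure carried by the exterior algebra of a symplectic vector space. Write $V=\R^{2n}$ and let $L:=\upomega\wedge\cdot$ be the operator in \eqref{eq4}, extended to all of $\Ep^\bullet V^*$. First I would note that the source and target of \eqref{eq4} have the same dimension, since $\binom{2n}{n-1}=\binom{2n}{n+1}$; hence it suffices to prove that $L$ is injective on $\Ep^{n-1}V^*$, as injectivity then forces bijectivity.

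To organize the injectivity argument I introduce two companion operators. Let $\Lambda$ denote interior multiplication by the bivector $\sum_{j=1}^n\partial_{x_j}\wedge\partial_{y_j}$ dual to $\upomega$ (equivalently, the adjoint of $L$ for the inner product on $\Ep^\bullet V^*$ induced by the Euclidean structure on $V$); it lowers the degree by $2$. Let $H$ act on $\Ep^k V^*$ as multiplication by $k-n$. The claim is that these satisfy the $\mathfrak{sl}_2$ commutation relations
\[
[H,L]=2L,\qquad [H,\Lambda]=-2\Lambda,\qquad [L,\Lambda]=H.
\]
The first two are immediate from the grading, and I expect the last identity to be the one genuinely delicate point, being the algebraic heart of the lemma. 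The cleanest route is to use that $\Ep^\bullet V^*$ is the graded tensor product of the $n$ copies of $\Ep^\bullet\,\mathrm{span}(dx_j,dy_j)$, that $L=\sum_j L^{(j)}$ and $\Lambda=\sum_j\Lambda^{(j)}$ act as derivations, and that the cross terms $[L^{(i)},\Lambda^{(j)}]$ for $i\neq j$ vanish because they act on different (even) tensor factors. This reduces the verification to the trivial rank-one case $n=1$.

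Granting the relations, injectivity on $\Ep^{n-1}V^*$ is purely representation-theoretic. Suppose $\beta\in\Ep^{n-1}V^*$ satisfies $L\beta=0$; it is an $H$-eigenvector of eigenvalue $\lambda=-1$. By induction on $k$, using $[L,\Lambda]=H$ together with $L\beta=0$, one obtains
\[
L\,\Lambda^k\beta=k(\lambda-k+1)\,\Lambda^{k-1}\beta\qquad(k\geq 1).
\]
Since $\Lambda^k\beta$ has degree $n-1-2k$, it vanishes for $k$ large; let $N\geq 1$ be minimal with $\Lambda^N\beta=0$, so that $\Lambda^{N-1}\beta\neq 0$. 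Applying the identity with $k=N$ gives $0=N(\lambda-N+1)\Lambda^{N-1}\beta$, and as $\Lambda^{N-1}\beta\neq 0$ and $N\geq 1$ this forces $\lambda=N-1\geq 0$, contradicting $\lambda=-1$. Hence $\beta=0$, so $L$ is injective on $\Ep^{n-1}V^*$, and by the dimension count it is an isomorphism, proving \Cref{T8}.
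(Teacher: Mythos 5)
Your argument is correct. Note, however, that the paper does not actually prove Lemma~\ref{T8}: it is quoted as a known algebraic fact of Lefschetz and referred to \cite{Bryant} and \cite{huybrechts}, so there is no in-paper proof to compare against. What you have written is essentially a self-contained rendering of the standard proof from those references: the operators $L$, $\Lambda$, $H$ form an $\mathfrak{sl}_2$-triple on $\Ep^{\bullet}(\R^{2n})^*$ (Huybrechts, Proposition~1.2.26), and hard Lefschetz follows; your version specializes to the single power $L:\Ep^{n-1}\to\Ep^{n+1}$ and replaces the appeal to the classification of finite-dimensional $\mathfrak{sl}_2$-representations by the explicit weight identity $L\Lambda^k\beta=k(\lambda-k+1)\Lambda^{k-1}\beta$, which I checked by induction and which correctly forces $\beta=0$ from $\lambda=-1$. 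The one point to polish is the phrase that $L=\sum_j L^{(j)}$ and $\Lambda=\sum_j\Lambda^{(j)}$ ``act as derivations'': wedging with the $2$-form $dx_j\wedge dy_j$ and contracting with the bivector $\partial_{x_j}\wedge\partial_{y_j}$ are not derivations of the exterior algebra; what you actually need (and what is true) is that $L^{(i)}$ and $\Lambda^{(j)}$ are \emph{even} operators supported on the $i$-th and $j$-th tensor factors of $\Ep^{\bullet}(\R^{2n})^*\cong\bigotimes_j\Ep^{\bullet}\operatorname{span}(dx_j,dy_j)$, so that all cross-commutators vanish and the computation reduces to the four-dimensional rank-one case. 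With that wording fixed, the proof is complete and matches the cited literature; it trades the brevity of a citation for a fully elementary, self-contained verification.
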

As a straightforward application of Lemma~\ref{T8} we obtain
\begin{lemma}
\label{T9}
If $\kappa\in \Omega^{n+1}(\R^{2n+1})$, then there are smooth forms
$\beta\in \Omega^{n}(\R^{2n+1})$ and 
$\delta\in \Omega^{n-1}(\R^{2n+1})$
that do not contain components with $dt$ and satisfy
$$
\kappa=\upalpha\wedge\beta+d\upalpha\wedge\delta.
$$
Moreover, if $\kappa(p)=0$, then $\beta(p)=0$ and $\delta(p)=0$, i.e., $\supp\beta\cup\supp\delta=\supp\kappa$.
\end{lemma}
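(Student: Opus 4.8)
The plan is to reduce everything to pointwise linear algebra on the cotangent space of $\R^{2n+1}$. Although $\upalpha$ and $d\upalpha=4\upomega$ (as is immediate from the definition of $\upalpha$) are globally defined, the decomposition is governed fiberwise: at each point the covectors $dx_1,\dots,dx_n,dy_1,\dots,dy_n,\upalpha$ form a coframe, since $\upalpha=dt+2\sum_j(x_j\,dy_j-y_j\,dx_j)$ differs from $dt$ only by a combination of the $dx_j,dy_j$. Call a form \emph{horizontal} if it contains no $dt$; equivalently, if it is annihilated by contraction with the field $\partial_t$.

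First I would peel off the $\upalpha$-component. Let $\iota_{\partial_t}$ denote contraction with the Reeb field $\partial_t$, so that $\iota_{\partial_t}\upalpha=1$ while $\iota_{\partial_t}dx_j=\iota_{\partial_t}dy_j=0$. Setting
$$\beta:=\iota_{\partial_t}\kappa\in\Omega^n(\R^{2n+1}),\qquad \zeta:=\kappa-\upalpha\wedge\beta\in\Omega^{n+1}(\R^{2n+1}),$$
I would check that $\iota_{\partial_t}\beta=0$ and $\iota_{\partial_t}\zeta=\iota_{\partial_t}\kappa-\beta=0$, so that both $\beta$ and $\zeta$ are horizontal. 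Thus $\kappa=\upalpha\wedge\beta+\zeta$ with $\beta,\zeta$ smooth and free of $dt$.

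Next I would absorb $\zeta$ into the $d\upalpha$-term. Since $\zeta$ is horizontal, at each point $p$ it lies in $\Ep^{n+1}$ of the $2n$-dimensional space spanned by $dx_1,\dots,dy_n$, i.e. in $\Ep^{n+1}(\R^{2n})^*$. Because $d\upalpha=4\upomega$ and Lemma~\ref{T8} asserts that $\upomega\wedge\cdot\colon\Ep^{n-1}(\R^{2n})^*\to\Ep^{n+1}(\R^{2n})^*$ is an isomorphism, the map $d\upalpha\wedge\cdot$ is a fixed, constant-coefficient linear isomorphism from horizontal $(n-1)$-forms onto horizontal $(n+1)$-forms. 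Hence there is a unique horizontal $\delta(p)\in\Ep^{n-1}(\R^{2n})^*$ with $d\upalpha\wedge\delta(p)=\zeta(p)$; applying the (constant) inverse isomorphism shows that $\delta$ is a smooth horizontal $(n-1)$-form and that $\zeta=d\upalpha\wedge\delta$. Combining the two steps gives $\kappa=\upalpha\wedge\beta+d\upalpha\wedge\delta$ with $\beta\in\Omega^n$, $\delta\in\Omega^{n-1}$ smooth and containing no $dt$.

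Finally, the support statement follows from the pointwise nature of the construction: $\beta(p)=\iota_{\partial_t}\kappa(p)$ and $\delta(p)$ is the image of $\zeta(p)=\kappa(p)-\upalpha(p)\wedge\beta(p)$ under the inverse Lefschetz isomorphism, so each depends linearly on $\kappa(p)$ and vanishes wherever $\kappa(p)=0$; conversely $\kappa=\upalpha\wedge\beta+d\upalpha\wedge\delta$ vanishes wherever $\beta$ and $\delta$ do, whence $\supp\beta\cup\supp\delta=\supp\kappa$. I do not expect a serious obstacle: the only points requiring care are that contraction with $\partial_t$ genuinely characterizes horizontal forms, and that it is \emph{surjectivity} of the Lefschetz map — not merely injectivity — that allows all of $\zeta$ to be written as $d\upalpha\wedge\delta$ with no primitive remainder. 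This surjectivity is exactly what Lemma~\ref{T8} provides at this degree.
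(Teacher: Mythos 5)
Your proof is correct and follows essentially the same route as the paper: decompose $\kappa$ pointwise in the coframe $\{\upalpha, dx_j, dy_j\}$ into an $\upalpha\wedge(\cdot)$ part and a horizontal part, then apply the Lefschetz isomorphism of Lemma~\ref{T8} to write the horizontal part as $d\upalpha\wedge\delta$, with smoothness and the support statement following from the linearity and constancy of these pointwise isomorphisms. The only (harmless) stylistic difference is that you realize the splitting explicitly via contraction with $\partial_t$, i.e.\ $\beta=\iota_{\partial_t}\kappa$, where the paper argues abstractly from the basis $\{dx_I\wedge dy_J,\ \upalpha\wedge dx_{I'}\wedge dy_{J'}\}$ of $\Ep^{n+1}T_p^*\R^{2n+1}$.
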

\begin{proof}
If $\pi:\R^{2n+1}\to\R^{2n}$, $\pi(x,y,t)=(x,y)$, then $\pi^*\upomega\in\Omega^2(\R^{2n+1})$ is a natural identification of the symplectic form with a $2$-form on $\R^{2n+1}$. Note that
$d\upalpha
=4\sum_{j=1}^n dx_j\wedge dy_j=4\pi^*\upomega$.

With the notation $dx_I:=dx_{i_1}\wedge\ldots\wedge dx_{i_k}$, $I=(i_1,\ldots,i_k)$, $1\leq i_1<\ldots<i_k\leq n$, $|I|=k$, it easily follows from Lemma~\ref{T8} that if $\eta\in\Omega^{n+1}(\R^{2n+1})$, and
$
\eta=\sum_{|I|+|J|=n+1} \eta_{IJ}\, dx_I\wedge dy_J,
$
i.e., if $\eta$ is a form that does not contain components with $dt$, then $\eta$ can be uniquely written as $\eta=d\upalpha\wedge\delta$, where $\delta$ does not contain components with $dt$. Since the coefficients of $\delta$ are obtained from the coefficients of $\eta$ through the isomorphism \eqref{eq4} of linear spaces, it follows that $\delta\in\Omega^{n-1}(\R^{2n+1})$ and $\delta(p)=0$ whenever $\eta(p)=0$.

Since $\{\upalpha(p),dx_1,dy_1,\ldots,dx_n,dy_n\}$ is a basis of $T^*_p\R^{2n+1}$, it follows that 
elements of the form
$$
dx_I\wedge dy_J, \
|I|+|J|=n+1, 
\quad
\text{and}
\quad
\upalpha(p)\wedge dx_{I'}\wedge dy_{J'},
\
|I'|+|J'|=n,
$$
form a basis of $\Ep^{n+1}T_p^*\R^{2n+1}$. Therefore, any form $\kappa\in\Omega^{n+1}(\R^{2n+1})$ can be uniquely represented as $\eta+\upalpha\wedge\beta$, where at every point, $\eta$ belongs to the span of $dx_I\wedge dy_J$, $|I|+|J|=n+1$ and $\beta$ belongs to span of $dx_{I'}\wedge dy_{J'}$, $|I'|+|J'|=n$. Hence,
$$
\kappa=\eta+\upalpha\wedge\beta=d\upalpha\wedge\delta +\upalpha\wedge\beta.
$$
Since the coefficients of the forms $\eta$ and $\beta$, and hence coefficients of the forms $\delta$ and $\beta$ are obtained from coefficients of $\kappa$ (in the standard basis) by linear isomorphisms, it follows that the forms $\delta\in\Omega^{n-1}(\R^{2n+1})$, $\beta\in\Omega^n(\R^{2n+1})$ are smooth, and $\delta(p)=0$, $\beta(p)=0$, whenever $\kappa(p)=0$.
\end{proof}

\subsection{Poincar\'e and Alexander dualities} 
Let $f:\Sph^k\to\R^m$, $k<m$, be a topological embedding. It follows from the Poincar\'e and the Alexander dualities that $H^{k+1}_c(\R^m\setminus{}f(\Sph^k))=\bbbz$. Thus, translating it into the de~Rham cohomology, there is a closed form $\kappa\in\Omega_c^{k+1}(\R^m\setminus f(\Sph^k))$ that cannot be written as $\kappa=d\eta$, for any $\eta\in\Omega^k_c(\R^m\setminus f(\Sph^k))$. The existence of $\kappa$ can be proved using the Mayer-Vietoris sequence. In the next result we construct $\kappa$ directly and explicitly. Our proof is elementary and self-contained, but it mimics the construction of the Mayer-Vietoris sequence.
\begin{lemma}
\label{T10}
Let $f:\Sph^k\to\R^m$, $k<m$, be a topological embedding. Then, there is a closed form $\kappa\in\Omega_c^{k+1}(\R^m\setminus f(\Sph^k))$ such that if $f_i\in C^\infty(\R^{k+1},\R^m)$ 
and $f_i|_{\Sph^k}$ converges uniformly to $f$, then there is $i_o$ such that
\begin{equation}
\label{eq5}
 \int_{\bbbb^{k+1}} f_i^*\kappa =1
\quad
\text{for all } i\geq i_o.
\end{equation}
\end{lemma}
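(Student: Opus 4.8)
The plan is to construct $\kappa$ and establish \eqref{eq5} simultaneously by induction on $k$, building $\kappa$ from the corresponding form for an equatorial $(k-1)$-sphere exactly as the Mayer--Vietoris connecting homomorphism produces a generator of $H^{k+1}_c$ from one of $H^{k}_c$. Write $A=f(\Sph^k)$. For the base case $k=0$ we have $A=\{p,q\}=\{f(-1),f(+1)\}$, two distinct points of $\R^m$; I would pick $\psi\in C^\infty(\R^m)$ equal to $0$ on a neighborhood of $p$ and near infinity and equal to $1$ on a neighborhood of $q$, and set $\kappa=d\psi$. Then $\kappa\in\Omega^1_c(\R^m\setminus\{p,q\})$ is closed, and for $f_i\in C^\infty(\R,\R^m)$ with $f_i(\pm1)\to f(\pm1)$ one computes $\int_{\bbbb^1}f_i^*\kappa=\psi(f_i(1))-\psi(f_i(-1))=1-0=1$ once $i$ is large enough that $f_i(1),f_i(-1)$ lie in the two prescribed neighborhoods.

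For the inductive step, split $\Sph^k=D_+\cup D_-$ into two closed hemispheres meeting along the equator $\Sph^{k-1}$, and put $A_\pm=f(D_\pm)$ and $E=f(\Sph^{k-1})=A_+\cap A_-$. The restriction $f|_{\Sph^{k-1}}$ is again a topological embedding, so the inductive hypothesis supplies a closed form $\kappa_E\in\Omega^{k}_c(\R^m\setminus E)$ with the analog of \eqref{eq5} in dimension $k-1$. Mimicking the connecting map, I would choose $\rho_+\in C^\infty(\R^m\setminus E)$ with $\rho_+\equiv1$ on a neighborhood of $A_+\setminus E$ and $\rho_+\equiv0$ on a neighborhood of $A_-\setminus E$, arranging the transition region (where $0<\rho_+<1$) to lie in $\R^m\setminus A$ and within distance $<\dist(\supp\kappa_E,E)$ of $E$, and set $\kappa:=d\rho_+\wedge\kappa_E$. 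Since $d\kappa_E=0$ we get $d\kappa=0$; since $d\rho_+$ is supported in $\R^m\setminus A$ (it vanishes where $\rho_+$ is locally constant, in particular on $A\setminus E$) and $\supp\kappa_E$ is a compact subset of $\R^m\setminus E$ disjoint from the transition region, the product $\kappa$ is a closed form with compact support in $\R^m\setminus A$, as required.

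To verify \eqref{eq5}, I would use that $f_i^*\kappa_E$ is closed, so $f_i^*\kappa=d\bigl((\rho_+\circ f_i)\,f_i^*\kappa_E\bigr)$ and Stokes' theorem gives
\[
\int_{\bbbb^{k+1}}f_i^*\kappa=\int_{\Sph^k}(\rho_+\circ f_i)\,f_i^*\kappa_E .
\]
The integrand localizes: for large $i$ the map $f_i$ sends a neighborhood of the equator to a neighborhood of $E$, which is disjoint from $\supp\kappa_E$, so $f_i^*\kappa_E$ vanishes there; away from the equator, $f_i(D_+)$ lies in $\{\rho_+=1\}$ and $f_i(D_-)$ in $\{\rho_+=0\}$ wherever $f_i$ meets $\supp\kappa_E$. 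Hence the contribution of $D_-$ drops out and the weight on $D_+$ acts as $1$ against the integrand, leaving $\int_{\bbbb^{k+1}}f_i^*\kappa=\int_{D_+}f_i^*\kappa_E$. Identifying $D_+$ with $\bbbb^k$ and viewing $f_i|_{D_+}$ as a smooth map $\R^k\to\R^m$ whose boundary restriction converges uniformly to $f|_{\Sph^{k-1}}$, the inductive hypothesis yields $\int_{D_+}f_i^*\kappa_E=1$ for all large $i$, completing the induction.

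Two points deserve care, and the second is the main obstacle. First, orientations of $\Sph^k$, of the hemisphere $D_+$, and of the equator, together with the sign in $\kappa=d\rho_+\wedge\kappa_E$, must be fixed compatibly so that the normalization stays $+1$ rather than $-1$ through the induction; this is pure bookkeeping anchored by the explicit base case. Second, and more delicate, is arranging the geometry so that the localization above holds \emph{uniformly} for all large $i$: one must choose the neighborhoods defining $\rho_+$, the equatorial collar, and the threshold $i_o$ so that, simultaneously, $f_i(\text{collar})$ avoids $\supp\kappa_E$ and $f_i(D_\pm\setminus\text{collar})$ falls into $\{\rho_+=1\}$, resp. $\{\rho_+=0\}$. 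This uses only compactness of $A$, positivity of $\dist(\supp\kappa_E,E)$, and uniform convergence $f_i|_{\Sph^k}\to f$, but it is where the estimates must be made honest. I would also record the structural reason the value is independent of the choices and of $i$: if the boundary $f_i|_{\Sph^k}$ stays in the fixed set $\R^m\setminus\supp\kappa$ (true for large $i$, since $A$ is disjoint from $\supp\kappa$), then Stokes together with the contractibility of $\R^m$ shows that $\int_{\bbbb^{k+1}}f_i^*\kappa$ depends only on the homotopy class of $f_i|_{\Sph^k}$ in $\R^m\setminus\supp\kappa$, which stabilizes to that of $f$.
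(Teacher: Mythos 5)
Your strategy is the same as the paper's: induction on $k$, the same base case, and the same Mayer--Vietoris-style splitting along the equator; your form $\kappa=d\rho_+\wedge\kappa_E=d(\rho_+\kappa_E)$ is literally the paper's $\kappa=d\sigma=d(\psi_+\tilde{\kappa})$, with $\rho_+$ in the role of the partition function $\psi_+$. There is, however, a genuine error in how you specify $\rho_+$: you require the transition region $\{0<\rho_+<1\}$ to lie within distance $<\dist(\supp\kappa_E,E)$ of $E$, and you assert that $\supp\kappa_E$ is disjoint from it. First, this cannot be arranged: if the transition region were contained in a small neighborhood $N$ of $E$, then on $\R^m\setminus N$ the continuous function $\rho_+$ would take only the values $0$ and $1$, hence be constant on each connected component of $\R^m\setminus N$; but $f(\Sph^k_+)\setminus N$ and $f(\Sph^k_-)\setminus N$ generally lie in the same component (e.g.\ a Jordan curve in $\R^2$ with $N$ two small balls about the two equator points), so $\rho_+$ could not equal $1$ near one and $0$ near the other. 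Second, and worse, if it could be arranged the construction would collapse: $d\rho_+$ vanishes off the transition region, so disjointness from $\supp\kappa_E$ forces $\kappa=d\rho_+\wedge\kappa_E\equiv 0$ and hence $\int_{\bbbb^{k+1}}f_i^*\kappa=0$, not $1$. Indeed $\supp\kappa_E$ must meet both $f(\Sph^k_+)$ and $f(\Sph^k_-)$ (otherwise, for large $i$, $f_i(\Sph^k_\pm)$ would miss $\supp\kappa_E$ and the inductive normalization $\int_{\Sph^k_\pm}f_i^*\kappa_E=\pm 1$ would fail), so the ``wall'' across which $\rho_+$ drops from $1$ to $0$ necessarily sweeps through $\supp\kappa_E$ far from $E$ --- and that is exactly where $\kappa$ lives. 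The fix is to drop the proximity requirement and keep only $\rho_+\equiv 1$ near $f(\Sph^k_+)\setminus E$ and $\rho_+\equiv 0$ near $f(\Sph^k_-)\setminus E$; these are disjoint relatively closed subsets of $\R^m\setminus E$, so a smooth Urysohn function exists, $d\rho_+$ still vanishes on a neighborhood of $f(\Sph^k)\setminus E$, $\kappa$ is supported in $\supp\kappa_E$ hence away from $E$, and your localization of $\int_{\Sph^k}(\rho_+\circ f_i)\,f_i^*\kappa_E$ then goes through. This is precisely what the paper's partition of unity subordinate to $\{\R^m\setminus f(\Sph^k_-),\ \R^m\setminus f(\Sph^k_+),\ U\}$ accomplishes.

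A second, minor point: at the end you apply the inductive hypothesis to $f_i|_{D_+}$ after ``identifying $D_+$ with $\bbbb^k$,'' but the hypothesis is stated for smooth maps of the flat ball, and the graph identification of the curved hemisphere with $\bbbb^k$ is not smooth at the boundary. The clean route, used in the paper, is one more application of Stokes: $f_i^*\kappa_E$ is closed, so its integrals over $\Sph^k_+$ and over the flat equatorial disk $\bbbb^k$ coincide (they cobound the upper half-ball), and the inductive hypothesis applies directly to the restriction of $f_i$ to the equatorial hyperplane.
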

\begin{remark}
Note that $\kappa\neq d\eta$ for all
$\eta\in\Omega_c^k(\R^m\setminus f(\Sph^k))$, as otherwise integral \eqref{eq5} would be zero for large $i$, by Stokes' theorem.
\end{remark}
\begin{proof}
Fix $m$ throughout the proof and proceed by induction on $k$.
For $k=0$ we have $\Sph^0=\{-1,+1\}$ and we take $\eta\in C_c^\infty(\R^m)$ that equals $1$ in a neighborhood of $f(+1)$ and equal $0$ in a neighborhood of $f(-1)$. Then, we define $\kappa=d\eta\in \Omega_c^1(\R^m\setminus f(\Sph^0))$. Clearly, $\kappa$ is closed. If $i$ is sufficiently large, then
$$
\int_{\bbbb^1} f_i^*\kappa=\int_{-1}^1f_i^*d\eta=
\eta(f_i(+1))-\eta(f_i(-1))=1.
$$
Suppose now that the result is true for $k-1$ and we will prove it for $k$.
Let $\Sph^k_+$, $\Sph^k_-$ and $\Sph^{k-1}=\Sph^k_+\cap\Sph^k_-$, be the closed hemispheres and the equator of $\Sph^k$, respectively. Let $\bbbb^k\subset\bbbb^{k+1}$, $\partial\bbbb^k=\Sph^{k-1}$, be the intersection of $\bbbb^{k+1}$ with the equatorial hyperplane. 

By the induction hypothesis we can find a closed form
$\tilde{\kappa}\in \Omega_c^k(\R^m\setminus f(\Sph^{k-1}))$ and $i_o$, such that 
\begin{equation}
\label{eq6}
\int_{\Sph^k_+} f_i^*\tilde{\kappa}=\int_{\bbbb^k}f_i^*\tilde{\kappa}=1,
\qquad
\text{for all } i\geq i_o,
\end{equation}
where the first equality follows from the fact that $\Sph^k_+\cup\bbbb^k$ is the boundary of a half-ball and the integral of the closed form $f_i^*\tilde{\kappa}$ on that boundary of the half-ball equals zero. Of course, we need to choose a proper orientation of $\Sph^k_+$.

Let $U$ be an open neighborhood of $f(\Sph^{k-1})$ such that $\supp\tilde{\kappa}\subset \R^m\setminus\overbar{U}$. Since
$$
\{\R^m\setminus f({\Sph}^k_-),\ \R^m\setminus f({\Sph}^k_+),\ U\}
$$ 
is an open covering of $\R^m$, we may choose a smooth partition of unity $\{\psi_+,\psi_-,\psi_U\}$ subordinate to that covering.
Let $\sigma=\psi_+\tilde{\kappa}$, and $\delta=\psi_-\tilde{\kappa}$. Since $\psi_U\tilde{\kappa}=0$, we have that $\tilde{\kappa}=\sigma+\delta$. 
Since $d\tilde{\kappa}=0$, it follows that $d\sigma=-d\delta$, so $\supp d\sigma=\supp d\delta$.
Note that $\sigma=0$ is a neighborhood of $f({\Sph}^k_-)$ and $\delta =0$ in a neighborhood of $f({\Sph}^k_+)$, so $d\sigma=-d\delta=0$ in a neighborhood of $f(\Sph^k)$ i.e.,
$d\sigma\in\Omega_c^{k+1}(\R^m\setminus f(\Sph^k))$. Moreover, the observation about supports of the forms $\sigma$ and $\delta$ along with \eqref{eq6} yields
$$
\int_{\bbbb^{k+1}} f_i^*d\sigma=\int_{\Sph^k} f_i^*\sigma=
\int_{\Sph^k_+} f_i^*\sigma=
\int_{\Sph^k_+} f_i^*(\sigma+\delta)=
\int_{\Sph^k_+} f_i^*\tilde{\kappa}=1
\quad
\text{for all } i\geq i_o.
$$
Therefore, the closed form
$\kappa:=d\sigma\in\Omega_c^{k+1}(\R^m\setminus f(\Sph^k))$ satisfies the claim.
The proof is complete.
\end{proof}

\section{Proof of Theorem~\ref{T2}}
Since $\overbar{\bbbb}^{n+1}\subset \overbar{\bbbb}^m$, an embedding of $\Sph^{m-1}\subset\overbar{\bbbb}^m$ yields an embedding of $\Sph^{n}\subset\overbar{\bbbb}^{n+1}$. Therefore, without loss of generality, we may assume that $m=n+1$. Suppose, for the sake of contradiction, that there is a map
$$
f\in C^{0,\gamma+}({\bbbb}^{n+1},\bbbh^n)\cap
C^{0,\theta}({\bbbb}^{n+1},\R^{2n+1})
\ \
\text{or}
\ \
f\in C^{0,\gamma}({\bbbb}^{n+1},\bbbh^n)\cap
C^{0,\theta+}({\bbbb}^{n+1},\R^{2n+1}),
$$
such that $f|_{\Sph^n}$ is an embedding. In order to approximate $f$ by convolution all the way up to the boundary, it would be convenient to assume that $f$ is defined on $\R^{n+1}$, but we can easily do that, because the extension of $f$ by $f(x)=f(x/|x|)$, when $|x|>1$, belongs to the same H\"older classes as the original map $f$. Let $f_i=f*\phi_{1/i}$. Clearly, $f_i$ restricted to $\Sph^n$ converges uniformly to $f|_{\Sph^n}$.

Let $\kappa\in \Omega_c^{n+1}(\R^{2n+1}\setminus f(\Sph^n))$ be as in Lemma~\ref{T10} (with $m=2n+1$ and $k=n$), so 
$$
\int_{\bbbb^{n+1}} f_i^*\kappa=1
\quad
\text{for all } i\geq i_o.
$$
According to Lemma~\ref{T9}, we have
$\kappa=\upalpha\wedge\beta +d\upalpha\wedge\delta$ for some $\beta\in\Omega_c^n(\R^{2n+1}\setminus f(\Sph^n))$ and $\delta\in\Omega_c^{n-1}(\R^{2n+1}\setminus f(\Sph^n))$. 
By increasing $i_o$ if necessary, we may assume that $f_i(\Sph^n)\cap\supp\delta=\varnothing$, and hence, $f_i^*\delta|_{\bbbb^{n+1}}\in \Omega_c^{n-1}(\bbbb ^{n+1})$ for $i\geq i_o$, so integration by parts yields
$$
\int_{\bbbb^{n+1}}df_i^*\upalpha\wedge f_i^*\delta=
\int_{\bbbb^{n+1}} f_i^*\upalpha\wedge f_i^*(d\delta). 
$$
Therefore,
\begin{align*}
1
&=
\int_{\bbbb^{n+1}} f_i^*\kappa=
\int_{\bbbb^{n+1}}
\big(f^*_i\upalpha\wedge f_i^*\beta+df_i^*\upalpha\wedge f_i^*\delta\big)
=
\int_{\bbbb^{n+1}} f_i^*\upalpha\wedge f_i^*(\beta+d\delta) \\
&\lesssim 
\Vert f_i^*\upalpha\Vert_{L^\infty(\bbbb^{n+1})} \Vert f_i^*(\beta+d\delta)\Vert_{L^\infty(\bbbb^{n+1})}.
\end{align*}
This however, yields a contradiction, because
Lemmata~\ref{T5} and~\ref{T7} yield the estimates
\[
\begin{split}
\Vert f_i^*\upalpha\Vert_{\infty}&
\Vert f_i^*(\beta+d\delta)\Vert_{\infty}
\lesssim
i^{1-2\gamma}[f]^2_{\gamma,2i^{-1},\bbbb^{n+1}_2}\cdot i^{n(1-\theta)} [f]^n_{\theta,i^{-1},\bbbb^{n+1}_2} \\
&=
[f]^2_{\gamma,2i^{-1},\bbbb^{n+1}_2}[f]^n_{\theta,i^{-1},\bbbb^{n+1}_2}\to 0
\quad
\text{as $i\to\infty$,}
\end{split}
\]
because $f\in C^{0,\gamma}\cap C^{0,\theta}$ and $f\in C^{0,\gamma+}$ or $f\in C^{0,\theta+}$. 

\qed


\end{document}